\documentclass[12pt,final]{article}

\usepackage[english]{babel}
\usepackage{amssymb,amsfonts}
\usepackage{amsmath, amsthm}
\usepackage{tikz-cd}
\usepackage[utf8]{inputenc} 
\usepackage{hyperref}       
\usepackage{url}            
\usepackage{booktabs}       
\usepackage{amsfonts}       
\usepackage{nicefrac}       
\usepackage{microtype}      
\usepackage{lipsum}
\usepackage{graphicx}
\usepackage{amssymb,amsfonts}
\usepackage{amsmath, amsthm}
\usepackage{clock}

\usepackage{euscript}

\theoremstyle{definition}
\newtheorem*{defn*}{Definition}

\theoremstyle{plain}
\newtheorem*{thm*}{Theorem}
\newtheorem{theorem}{Theorem}

\newtheorem*{lm*}{Lemma}

\newtheorem*{prop*}{Proposition}

\theoremstyle{definition}
\newtheorem*{crl*}{Corollary}

\newtheorem*{remark*}{Remark}

\newtheorem*{exm*}{Example}

\title{Singularities of diagonals of Laurent series\\ for rational functions}
\author{Dmitriy Pochekutov}
\date{}

\begin{document}

\maketitle
\begin{abstract}
	We study the complete diagonal of the Laurent series expansion of a rational function
	in $n$-complex variables.  For a denominator that is nondegenerate for its Newton polyhedron, 
	we prove that the complete diagonal, initially defined in a logarithmically convex domain, can be analytically continued along any path in the $r$-dimensional complex torus that avoids an explicitly defined complex analytic set~$L$ called the Landau variety. This variety is constructed as the union of discriminants associated with specific truncations of the denominator to the faces of its Newton polyhedron.	
\end{abstract}
\noindent \textbf{Keywords:}  Laurent series, complete diagonal, rational function, Newton polyhedron, amoeba, Landau variety, analytic continuation, hypergeometric function.

\section{Introduction}
Let $T^n_{\boldsymbol{z}}:=(\mathbb{C}\setminus{0})^n$ be the $n$-dimensional complex torus with coordinates $\boldsymbol{z}:=(z_1,\ldots, z_n)$.
Consider a Laurent polynomial
$$
    f(\boldsymbol{z}):=\sum_{\boldsymbol{\alpha}\in A} a_{\boldsymbol{\alpha}} \boldsymbol{z}^{\boldsymbol{\alpha}},
$$
that is a finite linear combination of monomials $\boldsymbol{z}^{\boldsymbol{\alpha}}:=z_1^{\alpha_1}\cdots z_n^{\alpha_n}$,
whose degrees $\boldsymbol{\alpha}:=(\alpha_1,\ldots, \alpha_n)$ range over a finite subset $A$ of the integer lattice $\mathbb{Z}^n$.
We assume that the coefficients $a_{\boldsymbol{\alpha}}$ are nonzero for all $\boldsymbol{\alpha}\in A$.
Then the Newton polyhedron $\Delta_f$ for $f$ is the convex hull in $\mathbb{R}^n$ of the set $A$. The Laurent polynomial $f$ naturally defines a complex-valued function on $T^n_{\boldsymbol{z}}$. We denote by $Z^\times(f)$
the set of its zeros in $T^n_{\boldsymbol{z}}$.

We are interested in all possible Laurent expansions
\begin{equation}
\label{eq:s0:1}
F(\boldsymbol{z}):= \sum_{\boldsymbol{\alpha}\in \mathbb{Z}^n} c_{\boldsymbol{\alpha}} \boldsymbol{z}^{\boldsymbol{\alpha}}
\end{equation}
centered at the point $\boldsymbol{0}:=(0,\ldots,0)$ for the rational function $g(\boldsymbol{z})/f(\boldsymbol{z})$, where $g(\boldsymbol{z})$ is a Laurent polynomial that is coprime with $f$. It is well known that the set of such expansions is in one-to-one correspondence with the set $\{E\}$ of connected components of the complement $\mathcal{A}_f^c:=\mathbb{R}^n\setminus \mathcal{A}_f$, where $\mathcal{A}_f$ denotes the amoeba of the Laurent polynomial $f$. Recall that the \textit{amoeba} $\mathcal{A}_f:=\Lambda(Z^\times(f))$, where the mapping $\Lambda:T^n_{\boldsymbol{z}} \to \mathbb{R}^n$ is defined by the formula
$$
    \Lambda(\boldsymbol{z}):=(\log|z_1|,\ldots,\log|z_n|).
$$
According to Corollary~1.6 from~\cite[chapter 6]{Gelfand94}, the set~$\{E\}$ consists of convex components~$E$ whose preimages $\Lambda^{-1}(E)$ are precisely the domains of absolute convergence of the expansions~\eqref{eq:s0:1}. Moreover, it follows from the results of \cite[s.~2]{Forsberg00} that to a component~$E$ of the complement $\mathcal{A}_f^c$, one can bijectively assign an integer point~$\boldsymbol{\nu}$ from the Newton polyhedron~$\Delta_f$, and this assignment possesses the remarkable property: the recession cone of the convex set~$E$ in~$\mathbb{R}^n$ coincides with the dual cone to $\Delta_f$ at the point~$\boldsymbol{\nu}$ (for clarity, the reader may refer to Figure~2). The value~$\boldsymbol{\nu}$ assigned to the component~$E$ is called its \textit{order}.

Let  $\boldsymbol{Q}:=(\boldsymbol{q}_1,\ldots,\boldsymbol{q}_r)$ be an $r$-tuple that generates an $r$-dimensional saturated sublattice of the lattice $\mathbb{Z}^n$.
Then the Laurent series
\begin{equation}
\label{eq:s0:2}
    d_{\boldsymbol{Q}}(\boldsymbol{t}):=\sum_{\boldsymbol{k}\in \mathbb{Z}^r} c_{\boldsymbol{Q}\cdot \boldsymbol{k}} \boldsymbol{t}^{\boldsymbol{k}}
\end{equation}
in  variables $\boldsymbol{t}:=(t_1,\ldots,t_r),$ where $\boldsymbol{Q}\cdot \boldsymbol{k} := \boldsymbol{q}_1 k_1+\ldots+\boldsymbol{q}_r k_r,$
we will call the {\it complete $\boldsymbol{Q}$-diagonal of rank~$r$} or {\it corank $s:=n-r$} of the Laurent series~\eqref{eq:s0:1}. For a convergent expansion~\eqref{eq:s0:1}, its $\boldsymbol{Q}$-diagonal has
a non-empty logarithmically convex domain of convergence~$\mathfrak{T}\subset T^r_{\boldsymbol{t}}$.

This construction leads to a class of functions widely represented in enumerative combinatorics~\cite[chapter~3]{Melczer21},~\cite[chapter~6]{Stanley99}, the theory of difference equations~\cite{Leinartas22}, statistical physics~\cite{Bostan13}, theoretical physics~\cite{Batyrev10}, and even group theory~\cite{Bishop24}.
The goal of the present paper is to describe the singularities of the complete $\boldsymbol{Q}$-diagonal of rank~$r$ of the Laurent expansion~\eqref{eq:s0:1} for the rational function $g(\boldsymbol{z})/f(\boldsymbol{z})$ in terms of its denominator. This is a necessary step in solving the question of the transcendence or algebraicity of the diagonal for~$n\geq 3$. In the case of two variables, the complete diagonal of the Laurent series of a rational function is always algebraic~\cite{Pochekutov09}.

The main result of the paper is Theorem~1, formulated in the second section. It is a multidimensional analogue of Theorem~2 from~\cite{Safonov84} that describes the singularities of the diagonal of the Taylor series of a rational function of two variables. Unlike the authors of the above-mentioned paper, who considered an arbitrary polynomial in two variables with a nonzero constant term as the denominator of the rational function, we restrict ourselves to a Laurent polynomial that is nondegenerate for its Newton polyhedron. Note that almost all Laurent polynomials with a fixed Newton polyhedron are nondegenerate for it.

In the third section of the present paper, we recall the known constructions of integral representations for the complete diagonals of Laurent series of rational functions. The proof of Theorem~1 is presented in the fifth section; it is based on the general idea of analytic continuation of functions that admit a representation as integrals with parameters, detailed in~\cite{Pham11, Vasiliev02} and, in a less rigorous but more accessible form, in~\cite[chapter~2]{Savin17}. According to this idea, the value of the integral depends analytically on the parameters outside a complex analytic subset~$L$ in~$T^r_{\boldsymbol{t}}$, traditionally called the {\it Landau variety} (see the fourth section of the present paper).

\section{Formulation of the Main Result}
Recall that a Laurent polynomial~$f$ is {\it nondegenerate for its Newton polyhedron}~$\Delta_f$ if for any face~$\delta$ of the polyhedron~$\Delta_f$, the truncation $f_\delta$ of this polynomial to the face~$\delta$ satisfies the condition:
$$ \nabla_{\boldsymbol{z}} f_\delta (\boldsymbol{p}):=
	\left(\frac{\partial f_\delta}{\partial z_1}(\boldsymbol{p}),\ldots ,\frac{\partial f_\delta}{\partial z_n}(\boldsymbol{p}) \right)
\neq \boldsymbol{0} \text{ for } \boldsymbol{p}\in Z^{\times}(f_\delta).$$

For each truncation $f_\delta$, using the $(r+1)\times n$ matrix
$$
	\boldsymbol{M}_{\delta}(\boldsymbol{z}):=\left(
	\begin{array}{ccc}
	z_1 \frac{\partial f_\delta}{\partial z_1} & \ldots & z_n\frac{\partial f_\delta}{\partial z_n} \\
	\boldsymbol{q}_1^1 & \ldots & \boldsymbol{q}_1^n \\
	\vdots & \ddots & \vdots \\ 
	\boldsymbol{q}_r^1 & \ldots & \boldsymbol{q}_r^n\\
	\end{array}
	\right)
$$
we define a subset $L_\delta$ in $T^r_{\boldsymbol{t}}$ as the image under the monomial mapping
$$
	\boldsymbol{z}\mapsto \boldsymbol{z}^{\boldsymbol{Q}}:=(\boldsymbol{z}^{\boldsymbol{q}_1},\ldots, \boldsymbol{z}^{\boldsymbol{q}_r})
$$
of the set of all points $\boldsymbol{p}\in Z^{\times}(f_\delta)$ where there rank $M_{\delta}(\boldsymbol{p})$ is less than $r+1.$ 

Note that the first row of the matrix $M_{\delta}(\boldsymbol{z})$ is the value 
$$
	\gamma_f(\boldsymbol{z}):=\left(z_1 \frac{\partial f_\delta}{\partial z_1}  : \ldots : z_n \frac{\partial f_\delta}{\partial z_n} \right)
$$
of the \textit{logarithmic Gauss map} $\gamma_{f_\delta}: Z^{\times}(f_\delta)\to \mathbb{CP}^{n-1}$ of the hypersurface~$ Z^{\times}(f_{\delta})$. Denoting by $\mathcal{L}(\boldsymbol{q}_1,\ldots, \boldsymbol{q}_r)$ the linear span over $\mathbb{C}$ of the vectors
$\boldsymbol{q}_1,\ldots, \boldsymbol{q}_r$, we arrive at the representation
$$
	L_{\delta}=\{\boldsymbol{p}^{\boldsymbol{Q}}\in T^r_{\boldsymbol{t}}: \gamma_{f_{\delta}}(\boldsymbol{p})\in \mathcal{L}(\boldsymbol{q}_1,\ldots, \boldsymbol{q}_r)\}.
$$	

Denoting by $L$ the union of the sets~$L_{\delta}$ over all faces~$\delta$ of the polyhedron~$\Delta_f$, we can finally formulate the main result of the paper.
\begin{theorem} 
	Let the Laurent polynomial~$f$ be nondegenerate for its Newton polyhedron~$\Delta_f$.
	Then the complete $\boldsymbol{Q}$-diagonal of the Laurent series~\eqref{eq:s0:1} of the rational function~$g(\boldsymbol{z})/f(\boldsymbol{z})$
	can be analytically continued along any path in $T^r_{\boldsymbol{t}} \setminus L$
	starting at the point~$\boldsymbol{t}_0\in \mathfrak{T}$.
   \label{thm1}
\end{theorem}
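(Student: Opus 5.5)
We will represent the diagonal as an integral with parameters and then apply the Pham--Vassiliev theory of analytic continuation (the Landau principle). Complete $\boldsymbol{Q}$ to a unimodular basis $(\boldsymbol{Q},\boldsymbol{P})$ of $\mathbb{Z}^n$; this is possible because the sublattice is saturated. After the monomial change of variables $\boldsymbol{z}=\boldsymbol{w}^{(\boldsymbol{Q},\boldsymbol{P})^{-1}}$, the diagonal becomes the sum over the first $r$ exponent coordinates with the last $s$ exponents set to zero. Fix $\boldsymbol{x}\in E$ with $\Lambda$-projection giving $\boldsymbol{t}_0\in\mathfrak T$. For $\boldsymbol{t}$ near $\boldsymbol{t}_0$ the Cauchy formula gives
$$
d_{\boldsymbol{Q}}(\boldsymbol{t})=\frac{1}{(2\pi i)^s}\int_{\Gamma}\frac{g}{f}(\boldsymbol{z}(\boldsymbol{t},\boldsymbol{u}))\,\frac{du_1}{u_1}\wedge\cdots\wedge\frac{du_s}{u_s},
$$
where $\boldsymbol{z}(\boldsymbol{t},\boldsymbol{u})$ parametrizes the fibre $X_{\boldsymbol{t}}=\{\boldsymbol{z}:\boldsymbol{z}^{\boldsymbol{Q}}=\boldsymbol{t}\}\cong T^s$ of the monomial map, and $\Gamma$ is the real $s$-torus $\Lambda^{-1}(\boldsymbol{x})\cap X_{\boldsymbol{t}}$. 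Thus $d_{\boldsymbol{Q}}(\boldsymbol{t})=\int_{\Gamma_{\boldsymbol{t}}}\omega$ with $\omega$ a holomorphic $s$-form on $X_{\boldsymbol{t}}\setminus Z^\times(f)$. I take this representation from the third section as known.

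Next, compactify the family. Let $X_\Sigma$ be a smooth projective toric variety whose fan refines the normal fan of $\Delta_f$. In the fibre $X_{\boldsymbol{t}}$, take its closure $\overline{X}_{\boldsymbol{t}}$, which is the toric variety of the projected fan, or work in $X_\Sigma\times T^r_{\boldsymbol{t}}$ with the family of closures. The singular set of $\omega$ in each fibre is the union of the closure $\overline{Z}_{\boldsymbol{t}}$ of $Z^\times(f)\cap X_{\boldsymbol{t}}$ and the boundary divisors. The locally trivial fibration (Thom isotopy) theorem says that the integral continues analytically along any path along which the stratified pair is topologically locally trivial. This holds as soon as $\overline{X}_{\boldsymbol{t}}$ meets every stratum of the Whitney stratification (toric orbits times the zero set) transversally. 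The integration cycle is then carried along by the isotopy, and the integral defines the continuation.

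The key step is to identify the non-transversality locus with $L$. In a toric orbit $O_\delta$ corresponding to a face $\delta$, the zero set of $f$ is given in the torus by $f_\delta=0$. Its intersection with the fibre $\boldsymbol{z}^{\boldsymbol{Q}}=\boldsymbol{t}$ fails to be transversal at $\boldsymbol{p}$ exactly when the differentials $d\log$ of $\boldsymbol{z}^{\boldsymbol{q}_j}$, restricted to the orbit, together with $df_\delta$ become dependent. In logarithmic coordinates this is precisely the condition $\operatorname{rank}\boldsymbol{M}_\delta(\boldsymbol{p})<r+1$, i.e.\ $\gamma_{f_\delta}(\boldsymbol{p})\in\mathcal{L}(\boldsymbol{q}_1,\dots,\boldsymbol{q}_r)$. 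Here $f_\delta$ is quasi-homogeneous, so points of $O_\delta$ lift to the big torus, where the same rank condition applies. Nondegeneracy guarantees that each $Z^\times(f_\delta)$ is smooth, so the strata are smooth. The boundary strata themselves, without $f$, are toric and always transversal to the fibres of a monomial map away from degenerate directions; I would absorb this into the choice of fan, refining it so that the map to the quotient torus is toric.

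The main obstacle is this boundary analysis. One must verify that the compactification is chosen so that the monomial map $X_\Sigma\supset T^n\to T^r$ extends properly, giving a proper map whose fibres are compact, as Thom's lemma requires, and that every non-transversality over a toric stratum is captured by some face truncation $f_\delta$. This includes the faces that correspond to cones not mapped onto the whole of $T^r$. I would handle it by taking $\Sigma$ to refine both the normal fan of $\Delta_f$ and the fan making the projection $\mathbb{Z}^n\to\mathbb{Z}^n/\mathcal{L}$ compatible, that is, a toric morphism. Finally, avoiding $L$ gives local triviality, hence continuation along any path in $T^r_{\boldsymbol{t}}\setminus L$.
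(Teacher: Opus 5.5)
Your outline is correct and uses the same machinery as the paper: the $s$-dimensional integral representation, a smooth toric compactification, Thom's isotopy lemma for a proper projection, and continuation by transporting the cycle. What differs is how you organize the Landau-variety step.

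The paper passes to the coordinates $(\boldsymbol{t},\boldsymbol{u})$ and compactifies only $T^s_{\boldsymbol{u}}$, by a refinement of the fan of $\Delta$. It uses the removal of the discriminants $L_\sigma$ as an input: for $\boldsymbol{t}\in T$ every $\tilde f(\boldsymbol{t},\cdot)$ has Newton polytope $\Delta$ and is nondegenerate, so Khovanskii's theorem puts the fibrewise closures in general position with the divisors at infinity.

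You instead stratify one fixed hypersurface $\overline{Z^\times(f)}$ by toric orbits. Its strata are smooth and transversal to the boundary for every $\boldsymbol{t}$, by the nondegeneracy of $f$ itself. $L$ then appears as the set of critical values of $\boldsymbol{z}\mapsto\boldsymbol{z}^{\boldsymbol{Q}}$ on these strata, and your quasi-homogeneity lift is exactly what converts the orbitwise condition into $\operatorname{rank}\boldsymbol{M}_\delta<r+1$.

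Your version has three advantages. First, the stratified family is defined over all of $T^r_{\boldsymbol{t}}$, so the exceptional set arises as the discriminant of a fixed proper stratified map, closed by Whitney (a) and properness, instead of being removed first to make each fibre nondegenerate. Second, the vertex faces with vanishing coefficient $P_{\boldsymbol{\alpha}}(\boldsymbol{t})$ are picked up automatically through the $r$-dimensional orbits $O_\sigma\cong T^r_{\boldsymbol{t}}$. Third, you work directly with $\boldsymbol{M}_\delta$ as in the statement. The paper's route gives explicit coordinates and the discriminant form $\bigcup_{\sigma\in\Sigma}L_\sigma$ used in the examples.

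Note that only faces $\delta$ whose normal cone meets $K_{\mathbb R}:=\{\boldsymbol{v}\in\mathbb{R}^n:\boldsymbol{q}_j\cdot\boldsymbol{v}=0,\ j=1,\ldots,r\}$ in its relative interior contribute to your critical values. That set lies inside the $L$ of the statement, which is all you need.

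Two remarks in your sketch need correcting, though your last paragraph already contains the fix.

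The closure of a fibre $\{\boldsymbol{z}^{\boldsymbol{Q}}=\boldsymbol{t}\}$ in $X_\Sigma$ is governed by the restricted cones $\sigma\cap K_{\mathbb R}$, not by a projected fan. The fibres meet the orbits transversally only if $K_{\mathbb R}$ is a union of cones of $\Sigma$. Otherwise a cone $\tau\not\subset K_{\mathbb R}$ whose relative interior meets $K_{\mathbb R}$ gives an excess intersection, and by torus equivariance this happens for every $\boldsymbol{t}$.

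Also, the monomial map cannot extend from the complete $X_\Sigma$ to a map into $T^r_{\boldsymbol{t}}$. What you want is a refinement for which $\boldsymbol{z}\mapsto\boldsymbol{z}^{\boldsymbol{Q}}$ becomes a toric morphism to a complete toric compactification of $T^r_{\boldsymbol{t}}$. The relevant lattice map is $\boldsymbol{v}\mapsto(\boldsymbol{q}_1\cdot\boldsymbol{v},\ldots,\boldsymbol{q}_r\cdot\boldsymbol{v})$, whose kernel is the lattice of $K_{\mathbb R}$; the quotient is not by the span of the $\boldsymbol{q}_j$. This choice forces $K_{\mathbb R}$ to be a union of cones. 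The preimage of $T^r_{\boldsymbol{t}}$, namely $\bigcup_{\sigma\subset K_{\mathbb R}}O_\sigma$, is then a product of a smooth complete toric compactification of the fibre torus with $T^r_{\boldsymbol{t}}$, mapping properly by projection. This is the same kind of ambient space $\mathbb{X}_{\mathcal F}\times T$ as in the paper, so the two proofs differ only in which hypersurface is stratified and how its critical values are computed.
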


As was shown in works~\cite{Pochekutov21, Pochekutov22, Pochekutov25}, when studying diagonals it is convenient to pass to the complex torus $T^n_{\boldsymbol{w}}$ with coordinates $\boldsymbol{w}:=(w_1,\ldots, w_n)$,
which are related to $\boldsymbol{z}$ by the monomial transformation
\begin{equation}
	\label{eq:s0:tm}
		\boldsymbol{z}=\boldsymbol{w}^{\boldsymbol{A}}.
\end{equation}
Since the $r$-turple~$\boldsymbol{Q}$ generates an $r$-dimensional saturated sublattice of the lattice~$\mathbb{Z}^n$,
by means of the invariant factor theorem~\cite[theorem~16.6]{Curtis62} it can be constructively extended to a basis $\boldsymbol{q}_1,\ldots, \boldsymbol{q}_n$ of the entire lattice. Then the matrix~$\boldsymbol{A}$
is the inverse of the unimodular matrix~$\boldsymbol{B}$ whose columns are precisely the vectors $\boldsymbol{q}_1,\ldots, \boldsymbol{q}_n$.

The mapping~\eqref{eq:s0:tm} induces a one-to-one correspondence between Laurent polynomials~$f$ in the variables $\boldsymbol{z}$ and Laurent polynomials $\tilde{f}$ in the variables~$\boldsymbol{w}.$
The Newton polyhedra $\Delta_f$ and $\Delta_{\tilde{f}}$ are combinatorially equivalent under this correspondence: faces $\delta$ transform into faces~$\tilde{\delta}$ with the incidences between them preserved.
After the substitution, the matrix $M_{\delta}(\boldsymbol{\boldsymbol{z}})$ transforms into the matrix
$$
	\boldsymbol{M}_{\tilde{\delta}}(\boldsymbol{w}):=\left(
	\begin{array}{cccccc}
	w_1 \frac{\partial \tilde{f}_{\tilde{\delta}}}{\partial w_1} & \ldots & w_r \frac{\partial \tilde{f}_{\tilde{\delta}}}{\partial w_r} & w_{r+1}\frac{\partial \tilde{f}_{\tilde{\delta}}}{\partial w_{r+1}} & \ldots & w_n \frac{\partial \tilde{f}_{\tilde{\delta}}}{\partial w_n}  \\
	1 		& \ldots & 0 		& 0  		& \ldots & 0 \\
	\vdots 	& \ddots & \vdots 	& \vdots	& \ddots & \vdots\\ 
	0 		& \ldots & 1 		& 0 		& \ldots & 0\\
	\end{array}
	\right).
$$
Therefore, the set~$L$ can be described as follows. Consider $\tilde{f}(\boldsymbol{t},\boldsymbol{u})$ as a Laurent polynomial in the variables $\boldsymbol{u}\in T^s_{\boldsymbol{u}}$ with coefficients being Laurent polynomials in $\boldsymbol{t}\in T^r_{\boldsymbol{t}}$. Let $\Delta\subset \mathbb{R}^s$ be its Newton polyhedron. Denote by $\Sigma$ the family consisting of faces~$\sigma$ of the polyhedron~$\Delta$ for which the truncations $\tilde{f}_{\sigma}(\boldsymbol{t},\boldsymbol{u})$ do depend on $\boldsymbol{t}$. For a face~$\sigma\in \Sigma$, define a subset $L_{\sigma}$ in $T^r_{\boldsymbol{t}}$ such that for $\boldsymbol{t}\in L_{\sigma}$ there exists
a point $\boldsymbol{u}\in T^s_{\boldsymbol{u}}$ with the property
\begin{equation}
	\label{eq:s0:Ls}
		\tilde{f}_{\sigma}(\boldsymbol{t},\boldsymbol{u})=0,\ 
		\nabla_u \tilde{f}_{\sigma}(\boldsymbol{t},\boldsymbol{u}):=
		\left(\frac{\partial \tilde{f}_{\sigma}}{\partial u_1},\ldots,\frac{\partial \tilde{f}_{\sigma}}{\partial u_s}\right)(\boldsymbol{t},\boldsymbol{u})=0.
\end{equation}
In other words, $L_{\sigma}$ is the discriminant set of the algebraic equation $\tilde{f}_{\sigma}(\boldsymbol{t},\boldsymbol{u})=0$ in the unknowns $\boldsymbol{u}$ with coefficients being Laurent polynomials in~$\boldsymbol{t}$. Then $L$ is the union of the sets~$L_{\sigma}$ over all $\sigma\in \Sigma.$

\section{Integral Representations for the Complete Diagonal}

Let the Laurent series~\eqref{eq:s0:1} for a rational function $g(\boldsymbol{z})/f(\boldsymbol{z})$ converge in the domain $\Lambda^{-1}(E),$
where $E$ is a connected component of the complement~$\mathcal{A}^c_f$ of order~$\boldsymbol{\nu}$.

Consider a value $\boldsymbol{t}\in T^r_{\boldsymbol{t}}$ such that the amoebas of the binomials
$$f_1:=\boldsymbol{z}^{\boldsymbol{q}_1}-t_1 ,\ldots, f_r:=\boldsymbol{z}^{\boldsymbol{q}_r}-t_r$$ 
intersect at a point from the component~$E.$ For a vector $\boldsymbol{\varepsilon}:=(\varepsilon_1,\ldots,\varepsilon_r)$ with coordinates~$\pm 1$, denote by $\boldsymbol{x}_{\boldsymbol{\varepsilon}}(\boldsymbol{t})$ a point from the intersection $E_{\boldsymbol{\varepsilon}}$ of the component $E$ and the cone defined by the inequalities
$$
	\varepsilon_1\left(e^{ \boldsymbol{q}_1\cdot \boldsymbol{x} } - | t_1| \right) > 0,\ldots, 
	\varepsilon_r\left(e^{\boldsymbol{q}_r\cdot \boldsymbol{x}} - | t_r| \right) > 0, 
$$
where $\boldsymbol{x}:=(x_1,\ldots, x_n)$ are coordinates in~$\mathbb{R}^n$.
In other words, the amoebas $\mathcal{A}_{f_1},\ldots, \mathcal{A}_{f_r}$, which are hyperplanes in~$\mathbb{R}^n$ with normals $\boldsymbol{q_1},\ldots, \boldsymbol{q}_r$, respectively, for the specified choice of the parameter value~$\boldsymbol{t}$ divide the convex component~$E$ into $2^r$ open convex parts~$E_{\boldsymbol{\varepsilon}}$. The vector $\boldsymbol{\varepsilon}$ controls the choice of the point $\boldsymbol{x}_{\boldsymbol{\varepsilon}}(\boldsymbol{t})$ in a part of this partition (see Fig.~1, illustrating the case $n=3$ and $r=2$).

\begin{figure}[!h]
	\centering
	\includegraphics[scale=0.55]{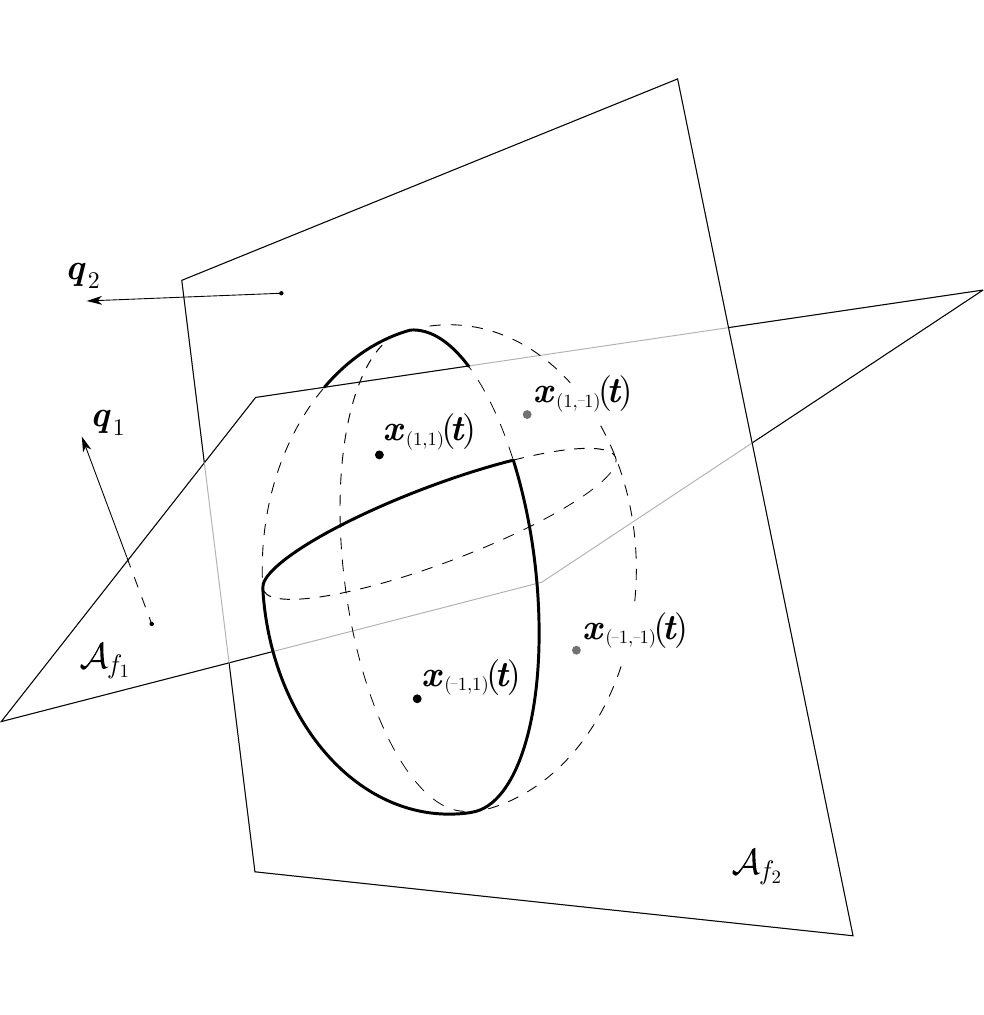}
	\caption{Partition of the component $E$ by the amoebas $\mathcal{A}_{f_1}, \mathcal{A}_{f_2}$ and the choice of points $\boldsymbol{x}_{\boldsymbol{\varepsilon}}(\boldsymbol{t})$.}
	\label{pic:1}
\end{figure}

The class of the real $n$-dimensional torus $\Gamma_{\boldsymbol{\varepsilon},\boldsymbol{t}}:=\Lambda^{-1}(\boldsymbol{x}_{\boldsymbol{\varepsilon}}(\boldsymbol{t}))$ in the homology group $H_n(T^n{\boldsymbol{z}}\setminus Z^\times(f\cdot f_1 \cdots f_r))$ with integer coefficients
does not depend on the choice of the point $\boldsymbol{x}_{\boldsymbol{\varepsilon}}(\boldsymbol{t})$ in~$E_{\boldsymbol{\varepsilon}}$ due to convexity. Then in the complement $T^n_{\boldsymbol{z}}\setminus Z^\times(f\cdot f_1 \cdots f_r)$ one can define the $n$-dimensional cycle 
$$
\Gamma_{\boldsymbol{t}} := \sum_{\boldsymbol{\varepsilon}} (\varepsilon_1\cdots \varepsilon_r) 
\Gamma_{\boldsymbol{\varepsilon},\boldsymbol{t}},
$$
where the summation is over all $\boldsymbol{\varepsilon}$ with coordinates $\pm 1.$

It is not difficult to show, following the reasoning from section~4 of article~\cite{Pochekutov09}, that the equality holds
\begin{equation}
\label{eq:s1:1}
	d_{\boldsymbol{Q}}(\boldsymbol{t})=\frac{1}{(2\pi \imath)^n} \int_{\Gamma_{\boldsymbol{t}}} 
	\frac{g(\boldsymbol{z})}{f(\boldsymbol{z})} \times \prod_{j=1}^{r} 
	\frac{ \boldsymbol{z}^{\boldsymbol{q}_j}}{\boldsymbol{z}^{\boldsymbol{q}_j}-t_j}
	\times \frac{dz_1\wedge\ldots\wedge d z_n}{z_1\cdots z_n}.
\end{equation}
Indeed, since the integral in \eqref{eq:s1:1} is the sum
$$
    \sum_{\boldsymbol{\varepsilon}}\int_{\Gamma_{\boldsymbol{\varepsilon},\boldsymbol{t}}} 
	\frac{g(\boldsymbol{z})}{f(\boldsymbol{z})} \times \prod_{j=1}^{r} 
	\frac{ \boldsymbol{z}^{\boldsymbol{q}_j}}{\boldsymbol{z}^{\boldsymbol{q}_j}-t_j}
	\times \frac{dz_1\wedge\ldots\wedge d z_n}{z_1\cdots z_n},
$$
then, expanding the factors of the integrand into Laurent series centered at the origin on the compacts~$\Gamma_{\boldsymbol{\varepsilon},\boldsymbol{t}},$ and then, using the commutativity of the series and integral over a compact set, using the multidimensional Cauchy formula we arrive at the right-hand side of expression~\eqref{eq:s0:2}.

As was shown in~\cite[Proposition~3]{Pochekutov22}, from the integral representation~\eqref{eq:s1:1}
using the monomial substitution~\eqref{eq:s0:tm}
and reducing the integration multiplicity, one can pass to a representation in the form of an $s$-dimensional integral
\begin{equation}
\label{eq:s1:2}
    d_{\boldsymbol{Q}}(\boldsymbol{t})=
   \int_{\tilde{\Gamma}_{\boldsymbol{t}}} 
   \omega(\boldsymbol{t}), 
\end{equation}
where the holomorphic in $T^s_{\boldsymbol{u}}\setminus Z^\times(\tilde{f}(\boldsymbol{t},\boldsymbol{u}))$ integrand differential form
$$
    \omega(\boldsymbol{t}):=
    \frac{1}{(2\pi\imath)^s} 
    \frac{\tilde{g}(\boldsymbol{t},\boldsymbol{u})}{\tilde{f}(\boldsymbol{t},\boldsymbol{u})} \frac{du_1\wedge\ldots \wedge du_s}{u_1\cdots u_s}
$$
depends rationally on the parameters~$\boldsymbol{t}=(t_1,\ldots, t_r)$.
The integration cycle $\tilde{\Gamma}_{\boldsymbol{t}}$ in~\eqref{eq:s1:2}
has the form $\Lambda^{-1}(\boldsymbol{y}(\boldsymbol{t})),$ where $\boldsymbol{y}(\boldsymbol{t})=(y_1(\boldsymbol{t}),\ldots, y_s(\boldsymbol{t}))$
is a point in the component $\tilde{E}'\subset \mathbb{R}^s$ of the complement of the amoeba of the polynomial $\tilde{f}(\boldsymbol{t},\boldsymbol{u}).$ More precisely,
according to~\cite[proposition~1]{Pochekutov22}, after the transformation~\eqref{eq:s0:tm} the component~$E$ of order $\boldsymbol{\nu}$ of the complement~$\mathcal{A}^c_f$ transforms
into the component~$\tilde{E}$ of order $\boldsymbol{\mu}=\boldsymbol{A} \boldsymbol{\nu}$ of the complement~$\mathcal{A}^c_{\tilde{f}}$. Then $\tilde{E}'$ is the component of order $(\mu_{r+1},\ldots, \mu_n).$

\section{The Set $L$ as a Landau Variety}

Note that among the faces $\sigma$ from the set $\Sigma$ defined in the second section, there may be vertices $\boldsymbol{\alpha}$ of the polyhedron $\Delta$. In this case, the truncations $\tilde{f}_{\boldsymbol{\alpha}}(\boldsymbol{t},\boldsymbol{u})$ are monomials of the form $P_{\boldsymbol{\alpha}}(\boldsymbol{t}) \boldsymbol{u}^{\boldsymbol{\alpha}}.$ Therefore, by removing from the torus $T^r_{\boldsymbol{t}}$ the union of all sets $L_{\boldsymbol{\alpha}},$
we get that for the remaining distinct, fixed values of $\boldsymbol{t}$, all polynomials $\tilde{f}(\boldsymbol{t},\boldsymbol{u})$ in the variables $\boldsymbol{u}$ (with coefficients in $\mathbb{C}$) have the same Newton polyhedra $\Delta$.

Furthermore, after excluding from $T^r_{\boldsymbol{t}}$ the sets $L_{\sigma}$ for faces from $\Sigma$ other than vertices, we get that for the remaining values of $\boldsymbol{t}$, the truncations $\tilde{f}_{\sigma}(\boldsymbol{t}, \boldsymbol{u})$ satisfy the conditions from the definition of a polynomial being nondegenerate for its Newton polyhedron.

At the same time, the truncation $\tilde{f}_{\delta}(\boldsymbol{t},\boldsymbol{u})$ to a face $\delta\not \in \Sigma$ depends only on $\boldsymbol{u}$; it corresponds bijectively to the truncation of the polynomial $\tilde{f}$ to some face $\gamma$ of its Newton polyhedron $\Delta_{\tilde{f}}$. Note that a Laurent polynomial $f$ nondegenerate for its Newton polyhedron $\Delta_f$ transforms, after the substitution \eqref{eq:s0:tm}, into a Laurent polynomial $\tilde{f}$ nondegenerate for its Newton polyhedron $\Delta_{\tilde{f}}$. Therefore, for all $\boldsymbol{t}\in T^r_{\boldsymbol{t}}$
and at all points $\boldsymbol{p} \in Z^{\times}(\tilde{f}_{\sigma}(\boldsymbol{t},\boldsymbol{u}))$, the gradient of the truncation
$\nabla{\boldsymbol{u}} \tilde{f}_{\sigma}(\boldsymbol{t},\boldsymbol{p}) \neq \boldsymbol{0}.$ Consequently, for a fixed $\boldsymbol{t}$ from the set
$$ T:= T^r_{\boldsymbol{t}} \setminus \bigcup_{\sigma\in \Sigma} L_{\sigma}$$
the Laurent polynomial $\tilde{f}(\boldsymbol{t},\boldsymbol{u})$ in the variables $\boldsymbol{u}$
is nondegenerate for its Newton polyhedron $\Delta$.

We will consider the smooth toric compactification $\mathbb{X}_{\mathcal{F}}$ of the complex torus $T^s_{\boldsymbol{u}}$, constructed from a sufficiently fine refinement of the fan $\mathcal{F}$ dual to $\Delta$ (see [18, §2]).  Let  $l_1,\ldots, l_d$ be $(s-1)$-dimensional orbits in $\mathbb{X}_{\mathcal{F}}$ corresponding  to one-dimensional generators of the fan $\mathcal{F}$.  We define the hypersurfaces in $\mathbb{X}_{\mathcal{F}}\times T$:
$$
	\mathcal{S}_0:=\bigcup_{\boldsymbol{t}\in T} \overline{Z^{\times}(\tilde{f}(\boldsymbol{t},\boldsymbol{u}))},\ 
	\mathcal{S}_j:=l_j\times T, \ j=  1,\ldots,d,
$$
where  the bar denotes closure in $\mathbb{X}_{\mathcal{F}}$. By the resolution of singularities theorem~\cite[s.~2]{Khovanskii77}, the hypersurface $\overline{Z^{\times}(\tilde{f}(\boldsymbol{t},\boldsymbol{u}))}$ intersects transversely
the hyperplanes at infinity $l_1,\ldots, l_d$ in $\mathbb{X}_{\mathcal{F}}$. Therefore, the hypersurfaces
$\mathcal{S}_0,\mathcal{S}_1,\ldots, \mathcal{S}_d$ are in general position in $\mathbb{X}_{\mathcal{F}}\times T$.
We stratify their union $\mathcal{S}$ by all possible intersections, i.e.,
the strata are  of the form
$$
	A_i:= \mathcal{S}_i \setminus \bigcup_{j\neq i } \mathcal{S}_j,\
	A_{ij}:= \mathcal{S}_i \cap \mathcal{S}_j \setminus \bigcup_{k\neq i,j} \mathcal{S}_k,\ \ldots\ .
$$

Consider the projection $\pi: \mathbb{X}_{\mathcal{F}}\times T\to T$; it is a proper map
since $\mathbb{X}_{\mathcal{F}}$ is compact. For each stratum $A$, denote by $c A$ the subset of this stratum consisting of points where
the rank of $d \pi |_{A}$ is less than $r.$ Note that the union $c\mathcal{S}$ over all strata $A$ of the sets $c A$ is closed \cite[s.~IV.4.4]{Pham11}.

The closure $\overline{c A}$ and its image $\pi (\overline{c A})$,
called the \textit{Landau variety of the stratum}~$A$,
are complex analytic sets \cite[s.~IV.5.2]{Pham11}.
Since the union $c\mathcal{S}$ is closed, $\pi(c \mathcal{S})$
is also a complex analytic set by Remmert's proper mapping theorem;
it is called the \textit{Landau variety of the stratified set}~$\mathcal{S}$.

Direct computation shows that the sets $cA_{i_1\ldots i_k}$ are empty if all $i_j>0.$

The set $cA_{0}$ consists of points $(\boldsymbol{t}, \boldsymbol{u})\in T^r_{\boldsymbol{t}}\times T^s_{\boldsymbol{u}}$ satisfying the system of algebraic equations
$$
 	\left\{
 	\begin{array}{rcr}
 		\tilde{f}(\boldsymbol{t},\boldsymbol{u}) & = & 0, \\
 		\nabla_{\boldsymbol{u}} \tilde{f}(\boldsymbol{t},\boldsymbol{u}) & = & 0.\\
 	\end{array}
 	\right.
$$
Since $\tilde{f}(\boldsymbol{t},\boldsymbol{u})=\tilde{f}_{\Delta}(\boldsymbol{t},\boldsymbol{u})$, the image $\pi(c A_0)$ coincides with the set $L_{\Delta}$. Moreover, the entire Landau variety
of the set $\mathcal{S}$ is represented in the form
$$
	L=\bigcup_{\sigma\in \Sigma} L_{\sigma}. 
$$
Indeed, each truncation $\tilde{f}_{\tau}(\boldsymbol{t},\boldsymbol{u})$ to a face $\sigma \in \Sigma$ of codimension $k$ can be written as a polynomial in the variables $v_1,\ldots, v_{s-k}$ 
by means of a suitable monomial change $\boldsymbol{u}=\boldsymbol{v}^{\boldsymbol{C}}$, where $\boldsymbol{C}$ is a unimodular matrix. Then the zero set of this polynomial in the torus $T^s_{\boldsymbol{v}}$ describes a stratum of the form $A_{0i_1\ldots i_k}$. Conversely, to each nonempty stratum of the form $A_{0i_1\ldots i_k}$, one can associate a truncation $\tilde{f}_{\sigma}(\boldsymbol{t},\boldsymbol{u})$.

Thus, the set $L=\pi(c \mathcal{S})$. Then the projection
\begin{equation}
\label{eq:pi}
\pi: (\mathbb{X}_{\mathcal{F}}\times T, \mathcal{S}) \to T
\end{equation}
defines a locally trivial fibration of a pair  by Thom's isotopy theorem since $\pi$ is proper.

\section{Proof of Theorem 1}
\begin{proof}
The locally trivial fibration~\eqref{eq:pi} induces a locally trivial
homological fibration with the fiber over a point $\boldsymbol{t}\in T$ equals to
$G(\boldsymbol{t}):=H_s(\mathbb{X}_{\mathcal{F}}\setminus U(\boldsymbol{t})),$
where
$$U(\boldsymbol{t}):=\overline{Z^{\times}(\tilde{f}(\boldsymbol{t},\boldsymbol{u}))}\cup l_1\ldots \cup l_d.$$
Since $ 	\mathbb{X}_{\mathcal{F}}\setminus U(\boldsymbol{t}) = 
	T^s_{\boldsymbol{u}}\setminus 
	Z^\times(\tilde{f}(\boldsymbol{t},\boldsymbol{u})),$
the fiber $G(\boldsymbol{t})$ coincides with the homology group to which the integration cycle class in~\eqref{eq:s1:2} belongs.

The set $T$ is obtained by removing from $\mathbb{C}^s$ complex analytic sets of positive codimension, therefore it is linearly connected. Suppose that for a point $\boldsymbol{t}_0$ the representation~\eqref{eq:s1:2} holds; choose an arbitrary path $\alpha:[0;1]\to T$,
connecting the points $\boldsymbol{t}_0$ and $\boldsymbol{t}_1$. For each point of the path $\alpha(\tau)$ one can choose an open ball
$B(\alpha(\tau))$ centered at this point, with a sufficiently small radius so that it lies in $T$.
The considered homological fibration will be trivial over $B(\alpha(\tau))$ since the ball is contractible  . Therefore, the groups $G(\boldsymbol{t})$
will be isomorphic for all $\boldsymbol{t}\in B(\alpha(\tau))$.
Moreover, for any continuous family of classes $[\Gamma_{\boldsymbol{t}}]$ from $G(\boldsymbol{t}),$
parameterized by $\boldsymbol{t}\in B(\alpha(\tau))$, the cycle class $\Gamma_{\alpha(\tau)}$
in $G(\boldsymbol{t})$ will coincide with $[\Gamma_{\boldsymbol{t}}]$ for all $\boldsymbol{t}\in B(\alpha(\tau)).$ By choosing a finite cover of the support of the path $\alpha$ by balls $B(\alpha(\tau))$ for compactness reasons, where
$$
	\tau_0:=0 < \tau_1 < \ldots < \tau_{N-1} < \tau_N:=1,
$$
we are able to continue the integral~\eqref{eq:s1:2} from a neighborhood in $B(\boldsymbol{t}_0)$ to $B(\boldsymbol{t}_1).$

First, for $\boldsymbol{t}\in B(\boldsymbol{t}_0)$, we replace the integration cycle in~\eqref{eq:s1:2} with the cycle $\Gamma_{\boldsymbol{t}_0}$.
Since $\Gamma_{\boldsymbol{t}_0}$ is compact, the integral $\int_{\Gamma_{\boldsymbol{t}_0}}\omega(\boldsymbol{t})$
defines an analytic function in $B(\boldsymbol{t}_0)$. Its analytic continuation into $B(\alpha(\tau_1))$ has the form
$\int_{\Gamma_{\alpha(\tau_1)}} \omega(\boldsymbol{t})$. Continuing this process, after a finite number of steps we obtain the analytic continuation
of the original element into $B(\boldsymbol{t}_1)$ as the integral $\int_{\Gamma_{\boldsymbol{t}_1}} \omega(\boldsymbol{t})$, where
$\Gamma_{\boldsymbol{t}_1}$ is the result of the stepwise deformation of the cycle $\Gamma_{\boldsymbol{t}_0}$.

Note that the result of the analytic continuation will not change if the path $\alpha$ is replaced by a path $\alpha'$ homotopic to it
in $T$, starting at the point $\boldsymbol{t}_0$ and ending at the point $\boldsymbol{t}_1$.
\end{proof}

\section{Examples of Hypergeometric Diagonals}

\textbf{Example 1.}
Consider the generalized hypergeometric function
$$
		{}_lF_{l-1}(\tfrac{1}{l+1},\ldots, \tfrac{l}{l+1};\tfrac{1}{l},\ldots, \tfrac{l-1}{l}; \tfrac{(l+1)^{l+1}}{l^l}t^l),\ l=1,2,\ldots , 
	$$
which is the $(1,1,1)$-diagonal of the Taylor expansion of the rational function
$1/f(z_1,z_2,z_3)$, holomorphic at the point $(0,0,0)$, where the polynomial is
$$	f(z_1,z_2,z_3):=1-z_1-(z_2z_3)^l-z_3.$$	
Using transformation~\eqref{eq:s0:tm}, which in this case has components
$z_1=\frac{w_1}{w_2 w_3},\ z_2=w_2,\ z_3=w_3,$
we pass to the Laurent polynomial
$$ \tilde{f}(t, u_1, u_2)=1-\frac{t}{u_1 u_2}-u^l_1 u_2^l-u_2.$$
The only non-empty set $L_\sigma,$ corresponds to the truncation $\tilde{f}_{\sigma}=1-t/(u_1 u_2)-u^l_1 u_2^l,$
for which system~\eqref{eq:s0:Ls} is equivalent to the system
$$ 
	1-\frac{t}{u_1 u_2}-u^l_1 u_2^l=\frac{t}{u_1 u_2} -l u_1^{l} u_2^l=0.
$$
Eliminating the variables $u_1, u_2$, we find that the set $L_{\sigma}$ consists of a single point
$$ t_0=\frac{l}{l+1}\left(\frac{1}{l+1}\right)^{\tfrac1l},$$
in complete agreement with known results on the singularities of the generalized hypergeometric function.

\begin{figure}
\centering
\includegraphics[width=0.48\textwidth]{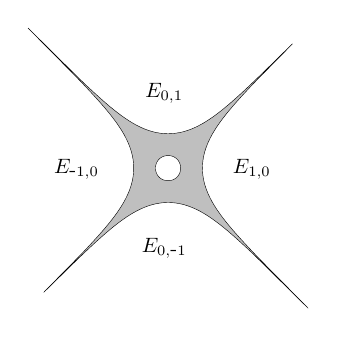}
\includegraphics[width=0.48\textwidth]{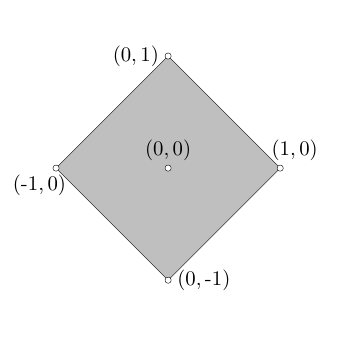}
\caption{Amoeba (gray, left), its complement components (the bounded component without label is $E_{0,0}$)
  and the Newton polytope (right) of the Laurent polynomial $1-\frac{t_1}{u_1}-u_1-\frac{t_2}{u_2} -u_2$.}
\label{fig:fig2}
\end{figure}

\noindent \textbf{Example 2.} Consider the Appell function of the fourth kind
$$
		F_4(1,\tfrac{1}{2};1,1; \tfrac{t_1}{4},\tfrac{t_2}{4}),
$$
which is a hypergeometric series in two variables $t_1,t_2,$ converging in the domain $|t_1|^{\tfrac12}+|t_2|^{\tfrac12}<2$. It is the $(\boldsymbol{q}_1,\boldsymbol{q}_2)$-diagonal
of the Taylor expansion of the rational function $1/f(\boldsymbol{z})=(1-z_1-z_2-z_3-z_4)^{-1}$ for $\boldsymbol{q}_1=(1,1,0,0)$
and $\boldsymbol{q}_2=(0,0,1,1).$ Using a suitable transformation~\eqref{eq:s0:tm}, we can
pass to the Laurent polynomial
 $$
		\tilde{f}(t_1,t_2,u_1,u_2)=1-\frac{t_1}{u_1}-u_1-\frac{t_2}{u_2}-u_2,
$$
whose amoeba for small absolute values of the parameters $t_1,t_2$ and Newton polytope are shown in Figure~2 .
The Landau variety $L$ consists solely of the set $L_\Delta.$ Eliminating the variables $u_1,u_2$ from system~\eqref{eq:s0:Ls}, which in this case takes the form
   $$
		1-\frac{t_1}{u_1}-u_1-\frac{t_2}{u_2}-u_2=\frac{t_1}{u_1^2}-1=\frac{t_2}{u_2^2}-1=0,
   $$
  we find that $L_\Delta =\{ (t_1,t_2)\in (\mathbb{C}^{\times})^2:16t_1^2-32t_1t_2+16t_2^2-8t_1-8t_2+1=0\}.$

  Therefore, according to Theorem~1, the function $F_4(1,\tfrac{1}{2};1,1; \tfrac{t_1}{4},\tfrac{t_2}{4})$ continues to the domain
$$\mathbb{C}^2\setminus \{t_1=0\}\cup\{t_2=0\}\cup L_{\Delta}. $$

\bigskip
\bibliographystyle{unsrt}  


\end{document}